\newcommand{\id}{\operatorname{id}}
   \theoremstyle{plain}
   \newtheorem{thm}{Theorem}[section]
   \newtheorem{prop}[thm]{Proposition}
   \newtheorem{lem}[thm]{Lemma}
   \newtheorem{cor}[thm]{Corollary}
   \theoremstyle{definition}
   \newtheorem{defn}[thm]{Definition}
   \newtheorem{example}[thm]{Example}
   \theoremstyle{remark}
 \numberwithin{equation}{section}
\author{V. Manuilov}
\date{}
\address{Moscow Center for Fundamental and Applied Mathematics, Moscow State University,
Leninskie Gory 1, Moscow, 
119991, Russia}
\email{manuilov@mech.math.msu.su}
\title[Inverse semigroups of metrics on doubles related to subsets]{Inverse semigroups of metrics on doubles related to certain subsets}
\begin{document}

\begin{abstract}
Recently we have shown that the equivalence classes of metrics on the double of a metric space $X$ form an inverse semigroup. Here we define an inverse subsemigroup related to a family of isometric subspaces of $X$, which is more computable. As a special case, we study this subsemigroup related to the family of geodesic rays starting from the basepoint, for Euclidean spaces and for trees. 

\end{abstract}

\maketitle

\section{Introduction}

Recently, in \cite{M1}, we have shown that the equivalence classes of metrics on the double of a metric space $X$ form an inverse semigroup. Here we define an inverse subsemigroup related to a family of isometric subspaces of $X$, which is more computable. As a special case, we study this subsemigroup related to the family of geodesic rays starting from the basepoint, for Euclidean spaces and for trees. 

Let $X=(X,d_X)$ be a metric space with the metric $d_X$. 
We say that a metric $\rho$ on $X\sqcup X$ is {\it compatible} with $d_X$ if both restrictions of $\rho$ onto each copy of $X$ coincide with $d_X$. Recall that metrics $d$ and $d'$ on $Y$ are coarsely equivalent (we write $d\sim_c d'$) if there exists a monotone homeomorphism $\varphi$ of $(0,\infty)$ such that $d(y_1,y_2)\leq \varphi(d'(y_1,y_2))$ and $d'(y_1,y_2)\leq\varphi(d(y_1,y_2))$ for any $y_1,y_2\in Y$ \cite{Roe}. Let $M(X)$ be the set of coarse equivalence classes of metrics on the double $X\sqcup X$ of $X$ compatible with $d_X$. 

It was shown in \cite{M1} that $M(X)$ has a natural structure of an inverse semigroup. Here we recall this structure.
For convenience, we identify $X\sqcup X$ with $X\times\{0,1\}$ and write $x$ for $(x,0)$ and $x'$ for $(x,1)$, where $x\in X$.
Composition of two metrics, $\rho$ and $\sigma$, on $X\sqcup X$, compatible with $d_X$, was defined in \cite{M0} by 
$$
\rho\circ\sigma(x,y')=\inf_{u\in X}[\sigma(x,u')+1+\rho(u,y')];
$$
$$
\rho\circ\sigma(x,y)=\rho\circ\sigma(x',y')=d_X(x,y),\qquad x,y\in X. 
$$
Clearly, if $\rho_1\sim_c\rho_2$, $\sigma_1\sim_c\sigma_2$ then $\rho_1\circ\sigma_1\sim_c\rho_2\circ\sigma_2$.
We write $[\rho]$ for the coarse equivalence class of a metric $\rho$. It was shown in \cite{M1} that $M(X)$ is an inverse semigroup with respect to this composition, with the pseudoinverse element for $s=[\rho]\in M(X)$ given by $s^*=[\rho^*]$, where $\rho^*$ is a metric on $X\sqcup X$ compatible with $d_X$ such that $\rho^*(x,y')=\rho(y,x')$ for any $x,y\in X$.

The deficiency of the inverse semigroup $M(X)$ is that it is too great, as Lemma \ref{mnogo} below shows.
Recall that, given a subset $A\subset X$, there is a metric $\rho_A$ compatible with $d_X$ such that
\begin{equation}\label{rho}
\rho_A(x,y')=\inf_{a\in A}[d_X(x,a)+1+d_X(a,y)],
\end{equation}
and $[\rho_A]=[\rho_B]$ iff the Hausdorff distance between $A$ and $B$ is finite \cite{M1}.

\begin{lem}\label{mnogo}
Let $X=\mathbb R_+=[0,\infty)$ be the set of non-negative reals with the standard metric. Then the cardinality of $M(X)$ is at least continuum.

\end{lem}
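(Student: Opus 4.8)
The plan is to reduce the statement to a purely combinatorial fact about subsets of $\mathbb R_+$ via the correspondence $A\mapsto[\rho_A]$, and then exhibit continuum many subsets that are pairwise at infinite Hausdorff distance. By the fact recalled just before the statement (after \eqref{rho}), the assignment $A\mapsto[\rho_A]$ descends to an \emph{injection} from the set of subsets of $X=\mathbb R_+$ modulo ``finite Hausdorff distance'' into $M(X)$. Hence it suffices to produce a family $\{A_i\}_{i\in I}$ of subsets of $\mathbb R_+$ with $|I|=2^{\aleph_0}$ such that $d_H(A_i,A_j)=\infty$ whenever $i\neq j$; then $i\mapsto[\rho_{A_i}]$ is injective and $|M(X)|\geq 2^{\aleph_0}$, which is the claim.

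To build such a family I would start from a standard \emph{almost disjoint} family $\{S_i\}_{i\in I}$ of infinite subsets of $\mathbb N$ with $|I|=2^{\aleph_0}$ (for instance, identifying $\mathbb N$ with the nodes of the binary tree $2^{<\omega}$ and letting $S_\beta$ be the set of initial segments of a branch $\beta\in 2^{\omega}$; distinct branches share only finitely many initial segments). Then set
$$
A_i=\{\,2^{n}: n\in S_i\,\}\subset\mathbb R_+ .
$$
The key point is that the points $2^{n}$ are geometrically spaced: if $m\neq n$ then $|2^{n}-2^{m}|\geq 2^{n-1}$ (the nearest other power of $2$ to $2^n$ is $2^{n-1}$, at distance $2^{n-1}$).

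Now I would verify $d_H(A_i,A_j)=\infty$ for $i\neq j$. Since $S_i\cap S_j$ is finite and $S_i$ is infinite, the set $S_i\setminus S_j$ is infinite. For each $n\in S_i\setminus S_j$ we have $2^{n}\in A_i$, while every element of $A_j$ is of the form $2^{m}$ with $m\in S_j$, hence $m\neq n$; therefore $\operatorname{dist}(2^{n},A_j)\geq 2^{n-1}$. Letting $n\to\infty$ through $S_i\setminus S_j$ shows $\sup_{p\in A_i}\operatorname{dist}(p,A_j)=\infty$, so $d_H(A_i,A_j)=\infty$. This gives the desired continuum many pairwise-inequivalent metrics and completes the proof.

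The main obstacle is precisely ensuring the Hausdorff distance is \emph{infinite} rather than merely positive: a family of ``generic'' distinct subsets of $\mathbb R_+$ typically stays at bounded Hausdorff distance, so one needs the index sets to differ at \emph{infinitely} many places (hence the almost disjoint family, not just distinct sets) and, simultaneously, the ambient points at those places to recede to infinity (hence the exponential spacing $2^{n}$). Everything else is bookkeeping, and one could equally well use unions of intervals $[4^{n},2\cdot4^{n}]$ in place of the points $2^{n}$ if a ``thicker'' witness is preferred.
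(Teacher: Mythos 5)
Your proof is correct and follows essentially the same route as the paper: both reduce the claim to exhibiting continuum many subsets of $\mathbb R_+$ at pairwise infinite Hausdorff distance and then invoke the fact that $A\mapsto[\rho_A]$ separates such subsets. The only difference is the witness family --- the paper uses $A_\lambda=\{\lambda^n:n\in\mathbb N\}$ for $\lambda\in(1,\infty)$, while you use an almost disjoint family plucked out along powers of $2$; your verification of infinite Hausdorff distance is in fact somewhat more explicit than the paper's one-line assertion.
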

\begin{proof}
Let $\lambda\in(1,\infty)$, $A_\lambda=\{\lambda^n:n\in\mathbb N\}$. Clearly, the Hausdorff distance between $A_\lambda$ and $A_\mu$ is infinite when $\mu\neq\lambda$.

\end{proof}
 
Our aim is to define an inverse subsemigroup from metrics on the double of $X$ that is substantially smaller and more computable than $M(X)$.

The idea is to find in $X$ a family of isometric subsets and to find an inverse subsemigroup in $M(X)$ that surjects onto the semigroup of partial bijections of this family. In Section 2 we describe the construction of such subsemigroup for a general inverse semigroup and a family of its idempotents. In Section 3 we show how this construction works for the semigroup $M(X)$. One of the most natural choices for a family of idempotent metrics is that related to geodesic rays, and in Section 4 we give some details for the case when the family of isometric subsets of a geodesic metric space $X$ consists of geodesic rays starting at the fixed point $x_0\in X$. Sections 5 and 6 deal with the cases when $X$ is a Euclidean space and a rooted tree, respectively. In these two cases we evaluate the image of this subsemigroup in the semigroup of partial bijections of the visual boundary of $X$.

\section{An inverse subsemigroup related to a set of idempotents}\label{semigroup}

Let $S$ be an inverse semigroup with $0$ and $1$, $E(S)$ its set of idempotents. Recall that a semigroup is inverse if any element $s\in S$ has a unique pseudoinverse $t\in S$, which means that $sts=s$ and $tst=t$. This pseudoinverse element is denoted by $s^*$. Any two elements in $E(S)$ commute, $s^*s$ is idempotent for any $s\in S$, and $E(S)$ is partially ordered: for $e,f\in E(S)$, $e\leq f$ if $ef=e$. Two idempotents, $e$ and $f$ are equivalent if there exists $s\in S$ such that 
$ss^*=e$ and $s^*s=f$. In this case $es=ss^*s=sf$. 
Fix a set $\Phi\subset E(S)$ of mutually equivalent non-zero idempotents. Our aim is to define an inverse subsemigroup in $S$ that surjects onto the inverse semigroup of partial bijections of $\Phi$.

Recall that, given a set $X$, a partial bijection is a triple $(A,f,B)$, where $A,B\subset X$ and $f:A\to B$ is a bijection. The set $PB(X)$ of all partial bijiections of $X$ is an example of an inverse semigroup.


Consider the following subset $S(\Phi)\subset S$. Let $s\in S(\Phi)$ if, for any $e\in\Phi$, one of the following holds: 
\begin{itemize}
\item
either $es^*s=0$; 
\item
or $es^*s=e$ and then there exists $f\in\Phi$ such that 
\begin{equation}\label{1}
se=fs\quad \mbox{and}\quad fss^*=f. 
\end{equation}
\end{itemize}

Note that such $f$ is unique. Indeed, if $f_1,f_2\in \Phi$ satisfy (\ref{1}) then $f_1s=f_2s=se$. Multiplying this by $f_1$, we get $f_1s=f_1^2s=f_1f_2s$, hence $f_1=f_1ss^*=f_1f_2ss^*=f_1f_2$, which means that $f_1\leq f_2$. Similarly, $f_2\leq f_1$, hence $f_1=f_2$. 

Put $S_\Phi=S(\Phi)\cap S(\Phi)^*$, where $R^*=\{s^*:s\in R\}$.

\begin{lem}
$S_\Phi$ is an inverse semigroup.

\end{lem}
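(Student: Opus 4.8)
The plan is to verify that $S_\Phi$ is closed under the semigroup operation and under the pseudoinverse $s\mapsto s^*$; since $S_\Phi$ inherits associativity and the uniqueness of pseudoinverses from $S$, and since $S_\Phi$ is by construction symmetric under $*$, this suffices. Closure under $*$ is immediate: if $s\in S_\Phi=S(\Phi)\cap S(\Phi)^*$, then $s^*\in S(\Phi)^*\cap S(\Phi)=S_\Phi$ as well, because $(S(\Phi)^*)^*=S(\Phi)$. So the real content is multiplicative closure. Because $S_\Phi$ is $*$-symmetric, it is enough to show $S_\Phi$ is closed under products; equivalently, it suffices to show that $S(\Phi)$ is closed under composition, since then $S(\Phi)^*$ is closed under composition too (as $(st)^*=t^*s^*$), and $S_\Phi=S(\Phi)\cap S(\Phi)^*$ is closed as an intersection of subsemigroups.

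So the main step is: given $s,t\in S(\Phi)$, show $st\in S(\Phi)$. Fix $e\in\Phi$ and analyze $e(st)^*(st)=et^*s^*st$. The natural move is to first test $e$ against $t^*t$. If $et^*t=0$, one expects $et^*(s^*s)t=0$ as well — here I would use that $t^*t$ is an idempotent above $t^*(s^*s)t$ in the sense that $t^*(s^*s)t\cdot t^*t=t^*(s^*s)tt^*t$, and idempotents commute, to conclude $e(st)^*(st)=0$. If instead $et^*t=e$, then by the defining property of $S(\Phi)$ there is a unique $f\in\Phi$ with $te=ft$ and $ftt^*=f$. Now test $f$ against $s^*s$: either $fs^*s=0$, in which case I would show $e(st)^*(st)=et^*s^*st=t^*f s^*s\,\cdots$ — more precisely, using $te=ft$ one rewrites $et^*s^*st$ in terms of $fs^*s$ and gets $0$; or $fs^*s=f$, giving $g\in\Phi$ with $sf=gs$ and $gss^*=g$. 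In the latter case I claim $e(st)^*(st)=e$ and that $g$ is the required idempotent for $st$: indeed $(st)e=s(te)=s(ft)=(sf)t=(gs)t=g(st)$, and $g(st)(st)^*=gss^*\cdot\text{(stuff)}$ should collapse to $g$ using $gss^*=g$ together with $te=ft$, $ftt^*=f$. The computation that $et^*s^*st$ equals either $0$ or $e$ is the routine-but-delicate part: it requires carefully shuttling the idempotents $e,f,g$ past $s,t,s^*,t^*$ using the relations $te=ft$, $sf=gs$ and their $*$-transposes $et^*=t^*f$, $fs^*=s^*g$, plus commutativity of idempotents and the identities $t^*tf=f t^*t$ etc.

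I expect the main obstacle to be exactly this bookkeeping in the $fs^*s=f$, $et^*t=e$ branch: showing cleanly that $et^*s^*st=e$ and simultaneously that the produced $g$ satisfies $g(st)(st)^*=g$, without circular reasoning. A useful reduction is to note that for any $s\in S$ and $e\in E(S)$ with $es^*s=e$, the element $se$ satisfies $(se)(se)^*=ss^*\cdot(\text{conjugate of }e)$, and in our situation $se=fs$ forces $(se)(se)^*=fss^*=f$; iterating this observation for the product $st$ should give $g$ directly. The remaining subtlety is checking that when either of the "or $=0$" cases triggers, one genuinely lands on $e(st)^*(st)=0$ and not on some intermediate idempotent strictly between $0$ and $e$ — but since $\Phi$ consists of idempotents and $e(st)^*(st)$ is always $\le e$, and the defining dichotomy of $S(\Phi)$ is phrased precisely as "$=0$ or $=e$," this is forced once one shows $e(st)^*(st)$ is computed by the displayed chain. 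Finally I would remark that $S_\Phi$ contains $0$ and $1$ (taking $s=1$, every $e\in\Phi$ falls in the second case with $f=e$), so it is a genuine inverse semigroup with the same zero and identity as $S$.
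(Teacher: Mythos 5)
Your proposal is correct and follows essentially the same route as the paper's proof: the same reduction to multiplicative closure of $S(\Phi)$, the same two-stage case analysis (first on $et^*t$, then on $fs^*s$), and the same use of the relations $te=ft$, $sf=gs$ and their adjoints to collapse $e(st)^*(st)$ to $0$ or $e$ and $g(st)(st)^*$ to $g$. The computations you leave as "bookkeeping" are exactly the one-line chains the paper writes out (e.g.\ $et^*s^*st=t^*fs^*st=t^*ft=t^*te=e$), so there is no gap.
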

\begin{proof}
Let $s,t\in S(\Phi)$, and let $e\in\Phi$. If $et^*t=0$ then, using that $s^*s\leq 1$ for any $s\in S$, we have $et^*s^*st\leq et^*t=0$, hence $e(st)^*st=0$. If $et^*t\neq 0$ then there exists $f\in\Phi$ such that $te=ft$. Then $et^*=t^*f$, hence $e(st)^*st=t^*fs^*st$. Now, there are two possibilities. Either $fs^*s=0$, and then $e(st)^*st=t^*0t=0$, or $fs^*s=f$ and there exists $g\in\Phi$ such that $sf=gs$ and $gss^*=g$. In the latter case, we have $(st)e=sft=g(st)$. We also have 
$$
e(st)^*st=et^*s^*st=t^*fs^*st=t^*ft=t^*te=e, 
$$
and, similarly, $g(st)(st)^*=g$.
This proves that $st\in S(\Phi)$.  

By definition, if $s\in S_\Phi$ then $s^*\in S_\Phi$, hence the semigroup $S_\Phi$ is inverse. 

\end{proof}  

Let $s\in S_\Phi$. Set 
\begin{equation}\label{As}
A(s)=\{e\in\Phi:es^*s=e\}\subset\Phi. 
\end{equation}
Define $\alpha(s):A(s)\to\Phi$ by $\alpha(e)=f$, where $f$ satisfies (\ref{1}). Then $\alpha(s)$ is a bijection onto $B(s)=\alpha(A(s))\subset\Phi$ (the argument about uniqueness of $f$ works in the opposite direction, i.e. for $e$, too). Set $\alpha_s=(A(s),\alpha(s),B(s))$. Denote the set of all partial bijections on $\Phi$ by $PB(\Phi)$. Then $\alpha_s\in PB(\Phi)$ for any $s\in S_\Phi$. 

\begin{prop}
The map $s\mapsto\alpha_s$, $S_\Phi\to PB(\Phi)$, is a semigroup homomorphism.

\end{prop}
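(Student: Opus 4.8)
The plan is to show that $\alpha_{st} = \alpha_s \circ \alpha_t$ as partial bijections of $\Phi$, where the composition on the right is the usual composition in $PB(\Phi)$: its domain is $\{e \in A(t) : \alpha(t)(e) \in A(s)\}$, and on that domain it sends $e \mapsto \alpha(s)(\alpha(t)(e))$. So there are really two things to verify: first, that the domains match, i.e. $A(st) = \{e \in A(t) : \alpha(t)(e) \in A(s)\}$; and second, that the two bijections agree on that common domain.

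For the domain computation, recall from \eqref{As} that $e \in A(st)$ iff $e(st)^*st = e$, i.e. $et^*s^*st = e$. I would trace through the dichotomy exactly as in the proof of the preceding lemma: given $e \in \Phi$, if $et^*t = 0$ then $et^*s^*st \leq et^*t = 0$ so $e \notin A(st)$ and also $e \notin A(t)$, consistent with both sides being empty there. If $et^*t = e$, let $f = \alpha(t)(e) \in \Phi$, so that $te = ft$, $ft t^* = f$, and hence $et^* = t^*f$. Then $et^*s^*st = t^*fs^*st$. If $fs^*s = 0$, this is $0 \neq e$, so $e \notin A(st)$, matching the fact that $f = \alpha(t)(e) \notin A(s)$. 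If $fs^*s = f$, then (using $f = ft t^*$, or directly the computation $t^*fs^*st = t^*ft = t^*te = e$ already displayed in the lemma's proof) we get $e(st)^*st = e$, so $e \in A(st)$, matching $f \in A(s)$. This establishes $A(st) = \{e \in A(t): \alpha(t)(e) \in A(s)\} = \alpha(t)^{-1}(A(s) \cap B(t))$, which is precisely the domain of $\alpha_s \circ \alpha_t$ in $PB(\Phi)$.

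For the values: take $e \in A(st)$, put $f = \alpha(t)(e)$ and $g = \alpha(s)(f)$; I must check $\alpha(st)(e) = g$. By definition $\alpha(st)(e)$ is the unique element of $\Phi$ satisfying \eqref{1} for $st$, namely $(st)e = (\alpha(st)(e))(st)$ together with the idempotent condition. But $(st)e = s(te) = s(ft) = (sf)t = (gs)t = g(st)$, using $te = ft$ and $sf = gs$. And $g(st)(st)^* = g s t t^* s^* $; since $tt^* \leq 1$ one gets $gstt^*s^* \leq gss^* = g$, and combined with the reverse inequality (or more cleanly: $g = \alpha(s)(f)$ lies in $B(s) \subseteq A(s^*)$, and one can also argue $g(st)(st)^* = g$ directly as in the lemma's proof that $g(st)(st)^* = g$), the uniqueness clause forces $\alpha(st)(e) = g = \alpha(s)(\alpha(t)(e))$. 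Hence $\alpha_{st} = \alpha_s \circ \alpha_t$.

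The only genuinely delicate point is bookkeeping: making sure that the case analysis on $e$ (empty domain vs. nonempty) is exhaustive and that in each branch the statement "$e \in A(st)$" is equivalent to "$e \in A(t)$ and $\alpha(t)(e) \in A(s)$", rather than merely implied in one direction. Everything else is a short manipulation with the inverse-semigroup identities $e t^* = t^* f$, $sf = gs$, idempotents commuting, and $ss^* \leq 1$, all of which are either given in the excerpt or immediate; I do not expect any substantial obstacle beyond this careful matching of domains.
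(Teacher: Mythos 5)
Your proof is correct and is essentially the argument the paper leaves implicit: the paper dismisses the proposition as ``Obvious'' precisely because the computations you carry out (the dichotomy $et^*t=0$ versus $et^*t=e$, the identity $(st)e=g(st)$, and $g(st)(st)^*=g$) are already written out verbatim in the proof of the preceding lemma that $S_\Phi$ is closed under multiplication. Your careful matching of domains, showing $A(st)=\alpha(t)^{-1}\bigl(A(s)\cap B(t)\bigr)$ rather than a one-sided inclusion, is exactly the right bookkeeping, and the uniqueness clause for $f$ in \eqref{1} then forces $\alpha(st)(e)=\alpha(s)(\alpha(t)(e))$ as you say.
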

\begin{proof}
Obvious.

\end{proof}

\begin{example}
Let $S=PB(X_{2n})$, where $X_n=\{1,\ldots,n\}$, and let $\Phi=\{e,f\}$, where $e$ (resp., $f$) is the identity map of the set $Y_1=\{1,\ldots,n\}$ (resp., $Y_2=\{n+1,\ldots,2n\}$). Let $A,B\subset X_{2n}$ be two sets of the same cardinality, and let $s:A\to B$ be a partial bijection. Then $s\in S_\Phi$ if $A$ and $B$ coincide with either $Y_1$ or $Y_2$, hence $S_\Phi\cong PB(\{1,2\})$.

\end{example}

\begin{example}
Let $S=PB(X_n)$, $k<n$, and let $\Phi=\{e\}$, where $e$ is the identity map of the set $Y=\{1,\ldots,k\}$. Let $A,B\subset X_{n}$ be two sets of the same cardinality, and let $s:A\to B$ be a partial bijection. Then the condition $se=es$ implies that $|A\cap Y|=|B\cap Y|$, hence $s\in S_\Phi$ if $A$ and $B$ either contain $Y$ or do not intersect $Y$. In the first case $s$ is a permutation on $Y$ and a partial bijection on $X_n\setminus Y$, and in the second case $s$ is a partial bijection on $X_n\setminus Y$. Then $S_\Phi\cong (\{0\}\cup\Sigma_k)\times PB(X_{n-k})$, where $\Sigma_k$ denotes the transposition group on $k$ elements.

\end{example}

\section{Inverse semigroup from metrics on the double restricted to a family of subsets}

Let $X$, $Y$ be metric spaces. A map $f:Y\to X$ is almost isometric if there exists $C>0$ such that $|d(f(x),f(y))-d(x,y)|<C$ for any $x,y\in Y$. A self-map $h:X\to X$ is almost identity if there exists $C>0$ such that $d(x,h(x))<C$ for any $x\in X$. The two spaces, $Y_1$ and $Y_2$, are almost isometric if there exist almost isometries $f:Y_1\to Y_2$ and $g:Y_2\to Y_1$ such that $fg$ and $gf$ are almost identities for $Y_2$ and $Y_1$, respectively. 

Let $AI(Y,X)$ denote the set of all almost isometric maps from $Y$ to $X$. Let $\Psi\subset AI(Y,X)$, $\psi\in\Psi$, and let $A=\psi(Y)\subset X$. Let $\rho_A$ be a metric on the double of $X$ compatible with $d_X$ defined by (\ref{rho}).   

\begin{lem}
Let $\psi,\phi\in\Psi$, $A=\psi(Y)$, $B=\phi(Y)$. Then $[\rho_A]$ and $[\rho_B]$ are equivalent in $M(X)$. 

\end{lem}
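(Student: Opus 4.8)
The idea is to show that $[\rho_A]$ and $[\rho_B]$ are equivalent idempotents in $M(X)$ by constructing an explicit metric $\tau$ on the double of $X$ whose class $s=[\tau]$ satisfies $ss^* = [\rho_A]$ and $s^*s = [\rho_B]$. The natural candidate is built from the almost isometry relating $A$ and $B$: since $\psi,\phi\in\Psi\subset AI(Y,X)$, the composition $g = \psi\circ\phi^{-1}$ (or, more carefully, a map defined on $B$ using a quasi-inverse of $\phi$) is an almost isometric bijection $B\to A$ with an almost-inverse $A\to B$; I will package this into a metric $\tau$ on $X\sqcup X$ compatible with $d_X$, for instance
\[
\tau(x,y') = \inf_{b\in B}\bigl[d_X(x, g(b)) + 1 + d_X(b, y)\bigr],
\]
so that $\tau$ "routes" through the graph of $g$ rather than through the diagonal of a single subset.

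First I would verify that $\tau$ is indeed a metric compatible with $d_X$ (triangle inequality via the infimum, symmetry is not required since the double metric need not be symmetric, and the restriction to each copy is $d_X$); this is routine given the constructions already recalled in the paper for $\rho_A$. Second, I would compute $\tau\circ\tau^*$ and $\tau^*\circ\tau$ using the composition formula from the introduction: unwinding the infima and using that $g$ is an almost isometry with quasi-inverse, the composite $\tau\circ\tau^*(x,y')$ should be coarsely equivalent to $\inf_{a\in A}[d_X(x,a)+1+d_X(a,y)] = \rho_A(x,y')$, because travelling $x \to g(b) \to b \to g(b') \to y$ collapses, up to the additive constant $C$ from almost isometry, to $x \to a \to a' \to y$ for $a=g(b)$ ranging over (a cobounded subset of) $A$. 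Symmetrically $\tau^*\circ\tau \sim_c \rho_B$. This gives $ss^* = [\rho_A]$ and $s^*s = [\rho_B]$, which is exactly the definition of equivalence of idempotents in the inverse semigroup $M(X)$ recalled in Section~\ref{semigroup}.

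The main obstacle will be the bookkeeping in the middle step: the almost isometry $g$ is only defined up to a bounded error, and $\phi^{-1}$ is not literally a map, so one must either fix a quasi-inverse $\phi'$ with $\phi\phi'$ and $\phi'\phi$ within $C$ of the identities, or phrase everything in terms of the coarse relation $\{(a,b) : d_X(a,\psi(y))\le C,\ d_X(b,\phi(y))\le C \text{ for some } y\in Y\}$. One then has to check that all the additive constants that accumulate when composing $\tau$ and $\tau^*$ (three applications of the $+1$ and several applications of the almost-isometry constant $C$) stay uniformly bounded, so that the resulting metric differs from $\rho_A$ (resp.\ $\rho_B$) by a bounded amount and hence lies in the same coarse equivalence class. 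Once the constants are controlled, the conclusion is immediate from the semigroup structure; no genuinely new idea beyond the almost-isometry hypothesis is needed.
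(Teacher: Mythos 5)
Your proposal is essentially the paper's proof: the paper likewise builds a metric routed through the graph of an almost isometry $f\colon A\to B$, namely $\rho(x,y')=\inf_{a\in A}[d_X(x,a)+2C+d_X(f(a),y)]$, and then verifies $s^*s=[\rho_A]$ and $ss^*=[\rho_B]$ (using that idempotent metrics are determined up to coarse equivalence by the diagonal function $x\mapsto\rho(x,x')$). The one correction needed in your version is that the gap constant must be taken of order $C$ rather than $1$ (the paper uses $2C$): with $+1$ the mixed triangle inequality $\tau(x,y')+\tau(z,y')\ge d_X(x,z)$ can fail as soon as the almost-isometry error $C$ exceeds $2$, so the ``routine'' verification you defer would actually force you to enlarge the constant.
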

\begin{proof}
By assumption, $A$ and $B$ are almost isometric, i.e. there exists $C>0$ and maps $f:A\to B$ and $g:B\to A$ such that $|d_X(f(x),f(y))-d_X(x,y)|<C$ for any $x,y\in A$, $|d_X(g(x),g(y))-d_X(x,y)|<C$ for any $x,y\in B$, and $d_X(x,f\circ g(x))<C$ for any $x\in B$ and $d_X(x,g\circ f(x))<C$ for any $x\in A$. Let $\rho$ be the metric on the double of $X$ campatible with $d_X$ defined by
$$
\rho(x,y')=\inf_{a\in A}d_X(x,a)+2C+d_X(f(a),y)
$$ 
It is easy to check the triangle inequality for $\rho$, whence it is really a metric, hence it defines some $s=[\rho]\in M(X)$. We have to show that $s^*s=[\rho_A]$ and $ss^*=[\rho_B]$. 
Recall that by Proposition 3.2 in \cite{M1}, two idempotent metrics, $\rho$ and $\sigma$, are equivalent if the two functions, $x\mapsto\rho(x,x')$ and $x\mapsto\sigma(x,x')$ are equivalent.
As
\begin{eqnarray*}
\rho^*\rho(x,x')&=&\inf_{y\in X}(2\inf_{a\in A}d_X(x,a)+2C+d_X(f(a),y)+1)\\
&=&2\inf_{a\in A}d_X(x,a)+2C+1,
\end{eqnarray*}
we have $s^*s=[\rho_A]$.
Similarly,
\begin{eqnarray*}
\rho\rho^*(x,x')&=&\inf_{y\in X}(2\inf_{a\in A}d_X(y,a)+2C+d_X(f(a),x)+1)\\
&=&2\inf_{a\in A}d_X(x,f(a))+2C+1=2d_X(x,f(A))+2C+1.
\end{eqnarray*}
As for any $b\in B$ there exists $a\in A$ such that $d_X(b,f(a))<C$, we have 
$$
|d_X(x,B)-d_X(f(A))|<C, 
$$
therefore, $|\rho\rho^*(x,x')-2d_X(x,B)|<4C+1$, hence $ss^*=[\rho_B]$.  

\end{proof}

\begin{lem}\label{lem:existence}
Let $\Phi=\{\psi(Y):\psi\in\Psi\}$, $\psi\in\Psi$, $A=\psi(Y)$ and $s\in M(X)_\Phi$. Suppose that $[\rho_A]s^*s\neq 0$. Then, by assumption, there exists $\phi\in\Psi$ such that $s[\rho_A]=[\rho_B]s$ and $[\rho_B]ss^*=[\rho_B]$, where $B=\phi(Y)$.
Let $\rho$ be a metric on the double of $X$ such that $[\rho]=s$. Then there exists $C>0$ and almost isometries $f:A\to B$ and $g:B\to A$ such that $\rho(x,f(x)')<C$ for any $x\in A$ and $\rho(x',g(x))<C$ for any $x\in B$.

\end{lem}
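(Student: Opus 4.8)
The plan is to read the maps $f$ and $g$ off the metric $\rho$ directly. The first move is to extract a uniform a priori bound for $\inf_{u\in X}\rho(x,u')$ as $x$ ranges over $A$, using only the idempotent relation $[\rho_A]s^*s=[\rho_A]$ (which holds because $s\in M(X)_\Phi$ forces $[\rho_A]s^*s\in\{0,[\rho_A]\}$, and it is nonzero by hypothesis). Writing $s=[\rho]$ and $s^*=[\rho^*]$, this relation says $\rho_A\circ(\rho^*\circ\rho)\sim_c\rho_A$, so there is a monotone homeomorphism $\varphi_1$ of $(0,\infty)$ with $\rho_A\circ(\rho^*\circ\rho)(x,x')\leq\varphi_1\big(2d_X(x,A)+1\big)$ for all $x$. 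A direct computation from the definition of $\circ$ should give
\[
\rho_A\circ(\rho^*\circ\rho)(x,x')=\inf_{u\in X,\ a\in A}\big[\rho(x,u')+\rho(a,u')+d_X(a,x)\big]+3\ \geq\ \inf_{u\in X}\rho(x,u')+3 ,
\]
and evaluating the resulting inequality $\inf_{u\in X}\rho(x,u')\leq\varphi_1(2d_X(x,A)+1)-3$ at points $x\in A$ (where $d_X(x,A)=0$) yields $\sup_{x\in A}\inf_{u\in X}\rho(x,u')\leq\varphi_1(1)-3=:K<\infty$. I expect this to be the main obstacle: it is the one place where the algebraic hypothesis must be turned into the geometric statement that points of $A$ lie at bounded $\rho$-distance from the second copy of $X$, and it depends on pinning down the diagonal of $\rho_A\circ(\rho^*\circ\rho)$ exactly.

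The second move is to push this bound onto $B$ using $s[\rho_A]=[\rho_B]s$, i.e.\ $\rho\circ\rho_A\sim_c\rho_B\circ\rho$. Here the relevant computations are
\[
\rho\circ\rho_A(x,y')=\inf_{a\in A}\big[d_X(x,a)+\rho(a,y')\big]+2,\qquad
\rho_B\circ\rho(x,y')=\inf_{b\in B}\big[\rho(x,b')+d_X(b,y)\big]+2 ,
\]
so that $\rho\circ\rho_A(x,y')=\rho(x,y')+2$ whenever $x\in A$ (take $a=x$; the reverse bound is the triangle inequality) and $\rho_B\circ\rho(x,y')\geq\inf_{b\in B}\rho(x,b')+2$ always. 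Given $x\in A$, I would use the first move to pick $u_0\in X$ with $\rho(x,u_0')\leq K+1$; then, with $\varphi_2$ witnessing $\rho\circ\rho_A\sim_c\rho_B\circ\rho$,
\[
\inf_{b\in B}\rho(x,b')+2\ \leq\ \rho_B\circ\rho(x,u_0')\ \leq\ \varphi_2\big(\rho\circ\rho_A(x,u_0')\big)=\varphi_2\big(\rho(x,u_0')+2\big)\ \leq\ \varphi_2(K+3),
\]
giving $\sup_{x\in A}\inf_{b\in B}\rho(x,b')<\infty$; one then sets $f(x)$ to be any point of $B$ with $\rho(x,f(x)')<C_0$, where $C_0$ depends only on $\rho$, $A$, $B$.

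For $g$ I would simply run the same two moves with $s^*$ in place of $s$: one has $s^*\in M(X)_\Phi$, the needed idempotent relation $[\rho_B](s^*)^*s^*=[\rho_B]ss^*=[\rho_B]$ is precisely the hypothesis $[\rho_B]ss^*=[\rho_B]$, and $s^*[\rho_B]=[\rho_A]s^*$ is the adjoint of $s[\rho_A]=[\rho_B]s$; since $s^*$ is represented by $\rho^*$ and $\rho^*(x,y')=\rho(y,x')$, this produces a map $g:B\to A$ and a constant $C_0'$ with $\rho(x',g(x))<C_0'$ for all $x\in B$. Finally, with $C:=\max(C_0,C_0')$, the triangle inequality for $\rho$ gives, for $x_1,x_2\in A$,
\[
\big|d_X(f(x_1),f(x_2))-d_X(x_1,x_2)\big|=\big|\rho(f(x_1)',f(x_2)')-\rho(x_1,x_2)\big|\leq\rho(x_1,f(x_1)')+\rho(x_2,f(x_2)')<2C ,
\]
so $f$ is almost isometric, and $g$ likewise, which is the asserted conclusion.
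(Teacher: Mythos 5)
Your proof is correct, and it reaches the conclusion by a slightly different decomposition than the paper's. The paper exploits the single conjugation identity $s[\rho_A]s^*=[\rho_B]$ (which follows from $se=fs$ and $fss^*=f$): writing out $\rho\,\rho_A\,\rho^*\sim_c\rho_B$ and evaluating on the diagonal at a point $x\in B$, where $\rho_B(x,x')=1$, it gets $\rho\rho_A\rho^*(x,x')<C/2$ in one stroke and extracts from the explicit infimum a point $u$ with $\rho(x',u)<C/2$ and $a\in A$ with $d_X(u,a)<C/2$, setting $g(x)=a$; the map $f$ is obtained symmetrically. You instead split the work into two steps: the idempotent relation $[\rho_A]s^*s=[\rho_A]$, evaluated on the diagonal over $A$, gives the uniform bound $\sup_{x\in A}\inf_u\rho(x,u')<\infty$, and then the intertwining relation $s[\rho_A]=[\rho_B]s$, applied off-diagonally at the pair $(x,u_0')$, steers the near-minimizer into $B$. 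Both routes amount to computing the composed metrics explicitly and extracting near-optimizers from a bounded infimum, and your formulas for the compositions are right (the infimum over the intermediate variable collapses by the triangle inequality exactly as you claim). Your version costs an extra step but has the small advantage of making explicit where each of the two algebraic hypotheses is used, and of actually verifying the almost-isometry property of $f$ and $g$ at the end, which the paper's proof leaves implicit.
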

\begin{proof}
It follows from $se=fs$ that $ses^*=fss^*=f$. so we have $[\rho][\rho_A][\rho^*]=[\rho_B]$, i.e. $\rho\rho_A\rho^*$ is equivalent to $\rho_B$. This implies that there exists $C>0$ such that $\rho\rho_A\rho^*(x,x')<C/2$ for any $x\in B$.
Let $x\in B$. As 
\begin{eqnarray*}
\rho\rho_A\rho^*(x,x')&=&\inf_{u,v\in X}\rho^*(x,u')+\rho_A(u,v')+\rho(v,x')\\
&=&\inf_{u,v\in X}\rho(x',u)+\rho_A(u,v')+\rho(v,x')\\
&=&\inf_{u,v\in X,a\in A}\rho(x',u)+d_X(u,a)+1+d_X(a,v)+\rho(v,x')\\
&<&C/2,
\end{eqnarray*}
there exist $u\in X$, $a\in A$ such that $\rho(x',u)<C/2$ and $d_X(u,a)<C/2$. Set $g(x)=a$. Then $\rho(x',g(x))\leq\rho(x',u)+d_X(u,a)<C$ for any $x\in B$. Similarly, there is a map $f:A\to B$ such that $\rho(x,f(x)')<C$ for any $x\in A$.

\end{proof}

Set $\Phi=\{\rho_{\psi(Y)}:\psi\in\Psi\}$. Then $M(X)_\Phi$ is the inverse semigroup obtained from $M(X)$ by the procedure described in Section \ref{semigroup}. 

\section{Case of geodesic rays}

For convenience, here we consider {\it pointed} metric spaces $X$ with a basepoint $x_0\in X$. One of the most obvious choices for $Y$ is the half-line $Y=[0,\infty)$ with the natural metric, and with the basepoint 0. Let $\Psi$ denote the set of all isometries $\psi:[0,\infty)\to X$ with $\varphi(0)=x_0$. Then $\varphi([0,\infty))$ is a geodesic ray, so if we want to have enough elements in $\Psi$ then we should require that $X$ should be a {\it geodesic} space. Set 
$$
\Phi=\{[\rho_{\psi([0,\infty))}]:\psi\in\Psi\}\subset E(M(X)).
$$
We write $M(X)_r$ for $M(X)_\Phi$ with this choice for $\Phi$ (the subscript $r$ stands for `rays'). 

Recall some standard notions from metric geometry \cite{B-S}. A curve $\gamma:[0,1]\to X$ is rectifiable if 
$$
L(\gamma)=\sup\bigl\{\sum\nolimits_{i=1}^nd_X(\gamma(t_{i-1}),\gamma(t_i)):n\in\mathbb N, 0\leq t_0\leq t_1\leq\cdots\leq t_n\leq 1\bigr\}
$$ 
is finite. The metric $d_X$ is inner if $d_X(x,y)=\inf L(\gamma)$, where the infimum is taken over all rectifiable curves $\gamma$ from $x$ to $y$, for any $x,y\in X$. If this infimum is attained on some $\gamma$ then $\gamma$ is a geodesic connecting $x$ and $y$. 
A geodesic ray from $x_0\in X$ is an infinite length geodesic line that starts at $x_0$. A metric space is geodesic if any two points can be connected by a geodesic line.

Two geodesic rays, $r_1$ and $r_2$, are equivalent if there exists $C>0$ such that $d_X(r_1(t),r_2(t))<C$ for any $t\in[0,\infty)$. The set of all geodesic rays starting at the basepoint can be endowed with the following topology: the sets $V_{R,\varepsilon}(r)$ consisting of all geodesic rays $s$ such that $d_X(s(t),r(t))<\varepsilon$ for any $t\in[0,R)$ form its base at the point $r$. The corresponding quotient topology (with respect the equivalence above) gives a topology on the set $\partial X$ of equivalence classes of geodesic rays starting at the basepoint. The set $\partial X$ is called the visual boundary of $X$.

Let us consider the case when each $r\in\partial X$ is represented by a single geodesic ray. Then $\Phi=\{[\rho_r]:r\in\partial X\}$. In this case we can assign to each metric $\rho$ on the double of $X$ a function on $\partial X$ as follows.
For a geodesic ray $r\in\partial X$, set 
$$
f_\rho(r)=\inf\{C:\lim\sup_{x\in r}\rho(x,X')<C\}. 
$$
This gives a function $f_\rho:\partial X\to [1,\infty]$.

For $[\rho]\in M(X)_{r}$, consider the set $A=A([\rho])=f_\rho^{-1}([1,\infty))\subset\partial X$ (cf. (\ref{As})).

\begin{lem}
The set $A=f_\rho^{-1}([1,\infty))$ is an $F_\sigma$ set.

\end{lem}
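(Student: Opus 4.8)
The plan is to realise $A$ as a countable union of closed subsets of $\partial X$. First I would unwind the definition of $f_\rho$: since
$$
f_\rho(r)=\inf\{C:\limsup_{x\in r}\rho(x,X')<C\}=\limsup_{t\to\infty}\rho(r(t),X'),
$$
a ray $r$ lies in $A=f_\rho^{-1}([1,\infty))$ exactly when $f_\rho(r)<\infty$, that is, when the function $t\mapsto\rho(r(t),X')$ is bounded on some half-line $[m,\infty)$. As such a bound can be taken to be a positive integer and such an $m$ to be a positive integer, this gives
$$
A=\bigcup_{n,m\in\mathbb N}C_{n,m},\qquad C_{n,m}=\{r\in\partial X:\rho(r(t),X')\le n\ \text{for all}\ t\ge m\},
$$
so it suffices to prove that each $C_{n,m}$ is closed. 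Here I use that in the case under consideration the quotient map from the set of geodesic rays starting at $x_0$ onto $\partial X$ is a bijection, so that $\partial X$ carries the topology generated by the basic sets $V_{R,\varepsilon}(r)$.

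Next I would record the elementary Lipschitz estimate $|\rho(x,X')-\rho(z,X')|\le d_X(x,z)$ for $x,z$ in the first copy of $X$: for every $y\in X$ one has $\rho(x,y')\le\rho(x,z)+\rho(z,y')=d_X(x,z)+\rho(z,y')$ by the triangle inequality for $\rho$ together with compatibility, and taking the infimum over $y$ and then symmetrising in $x,z$ gives the claim.

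To show that $C_{n,m}$ is closed I would check that its complement is open. Let $r\notin C_{n,m}$ and choose $t_0\ge m$ with $\rho(r(t_0),X')>n$; put $\delta=\rho(r(t_0),X')-n>0$. I claim $V_{t_0+1,\delta/2}(r)\subset\partial X\setminus C_{n,m}$. Indeed, if $s\in V_{t_0+1,\delta/2}(r)$ then $d_X(s(t_0),r(t_0))<\delta/2$ since $t_0<t_0+1$, whence by the estimate above $\rho(s(t_0),X')\ge\rho(r(t_0),X')-\delta/2=n+\delta/2>n$; as $t_0\ge m$, this means $s\notin C_{n,m}$. Hence each $C_{n,m}$ is closed, and therefore $A$ is $F_\sigma$.

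The only point that needs care — and the reason for organising the proof this way — is that a basic neighbourhood $V_{R,\varepsilon}(r)$ constrains a ray only on the bounded segment $[0,R)$, so the ``tail'' condition defining $C_{n,m}$ cannot be approximated directly; passing to the complement is essential, because there the failure of the condition is witnessed at a single finite parameter value $t_0$, after which one simply picks $R>t_0$. The remaining bookkeeping (reducing real bounds and thresholds to integers to keep the union countable) is routine.
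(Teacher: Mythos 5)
Your proof is correct, and it follows the same basic strategy as the paper's: write $A$ as a countable union of sets defined by bounding $\rho(\cdot,X')$ along the ray, and prove these are closed using the $1$-Lipschitz estimate $|\rho(x,X')-\rho(z,X')|\le d_X(x,z)$ together with the basic neighbourhoods $V_{R,\varepsilon}$. The difference lies in the decomposition. The paper uses the sublevel sets $A_n=\{r:f_\rho(r)\le n\}$ of the $\limsup$-defined function $f_\rho$ and argues closedness by contradiction; since membership in $A_n$ only controls $\rho(r(t),X')$ for $t$ beyond some threshold depending on $r$, the step ``$\rho(y,X')\le n$'' for a point $y\in r$ at a \emph{prescribed} distance $R$ requires extra justification there. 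You avoid this by double-indexing: your $C_{n,m}$ is cut out by the pointwise condition $\rho(r(t),X')\le n$ for all $t\ge m$, which is an honest intersection of conditions each witnessed at a single parameter value, so the complement is manifestly open (failure at one $t_0$ propagates to a whole $V_{t_0+1,\delta/2}$-neighbourhood). This buys a cleaner and arguably tighter closedness argument at the cost of one extra index in the countable union; your closing remark correctly identifies the tail-versus-finite-window issue as the crux.
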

\begin{proof}
Set $A_n=\{r\in\partial X: f_\rho(r)\leq n\}$. Let $r_0\in\overline{A}_n$, i.e. any neighborhood $V_{R,\varepsilon}(r_0)$ of $r_0$ intersects $A_n$. Suppose that $f_\rho(r_0)>n$. Then there exists $x\in r_0$ such that $\rho(x,X')>n$. Take $r\in A_n\cap V_{R,\varepsilon}(r_0)$, where $R=d_X(x_0,x)$ and $\varepsilon=\frac{\rho(x,X')-n}{2}$. Let $y\in r$, $d_X(x_0,y)=R$, then $d_X(x,y)<\varepsilon$ and $\rho(y,X')\leq n$. By the triangle inequality, 
$$
\rho(x,X')\leq d_X(x,y)+\rho(y,X')\leq \varepsilon+n<\rho(x,X'). 
$$
This contradiction shows that $x\in A_n$, hence $A_n$ is closed. Clearly, $A=f_\rho^{-1}([1,\infty))=\cup_{n\in\mathbb N}A_n$. 

\end{proof}

\section{Evaluation of $M(\mathbb R^n)_{r}$}\label{Rn}

Let $\mathbb R^n$ be the euclidean space with the standard metric. Then the geodesic rays starting at the basepoint can be identified with the sphere $\mathbb S^{n-1}=\partial\mathbb R^n$.

\begin{lem}\label{angle}
Let $s\in M(\mathbb R^n)_{r}$, $[\rho]=s$, and let $r_1, r_2$ be two rays in $\mathbb R^n$. Let $e_i=[\rho_{r_i}]$, $i=1,2$. Suppose that $e_is^*s\neq 0$, $i=1,2$, and let $f_1,f_2\in E(M(\mathbb R^n))$ satisfy $se_i=f_is$ and $f_iss^*=f_i$, $i=1,2$. Let $t_i$ be the ray such that $[\rho_{t_i}]=f_i$, which exists by Lemma \ref{lem:existence}. Let $d_{\mathbb S^{n-1}}$ be the standard metric on the sphere. Then $d_{\mathbb S^{n-1}}(t_1,t_2)=d_{\mathbb S^{n-1}}(r_1,r_2)$.

\end{lem}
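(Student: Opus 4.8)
\emph{Proof idea.} The plan is to transfer the equality of angles into an equality of Euclidean distances measured ``at infinity'', transporting points between $r_i$ and $t_i$ at bounded cost via Lemma~\ref{lem:existence}. Since $e_is^*s\neq 0$, that lemma (applied to $s$, represented by $\rho$, and to the ray $r_i$) provides a constant $C>0$ --- take the larger of the two values for $i=1,2$ --- and almost isometric maps $h_i\colon r_i\to t_i$ with $\rho(x,h_i(x)')<C$ for every $x\in r_i$; here one uses that each point of $\partial\mathbb R^n$ is represented by a single geodesic ray from $x_0$, so the subset $B$ produced by the lemma is exactly $t_i$. I parametrize $r_1,r_2,t_1,t_2$ by arc length from $x_0$; these are straight rays, so for any two rays $q_1,q_2$ from $x_0$ one has $d_{\mathbb R^n}(q_1(R),q_2(R))=2R\sin\!\bigl(\tfrac12\theta(q_1,q_2)\bigr)$ for every $R\geq 0$, where $\theta(q_1,q_2)\in[0,\pi]$ denotes the angle at $x_0$, and $d_{\mathbb S^{n-1}}(q_1,q_2)=\theta(q_1,q_2)$ (or a fixed increasing function of it, for another normalisation of the sphere metric).

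Next I would observe that $h_i$ roughly preserves the distance to $x_0$: the defining inequality of an almost isometry, applied to $r_i(R)$ and $r_i(0)=x_0$, gives $\bigl|d_{\mathbb R^n}(h_i(r_i(R)),h_i(x_0))-R\bigr|<C$, and since $h_i(r_i(R))$ and $h_i(x_0)$ both lie on the straight ray $t_i$ this forces $h_i(r_i(R))=t_i(R+\delta_i(R))$ for all $R>b_i+C$, where $b_i:=d_{\mathbb R^n}(h_i(x_0),x_0)$ and $|\delta_i(R)|<b_i+C$. The key point is that $b_i$ is a constant independent of $R$.

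Then I would run two triangle-inequality estimates in the double $\mathbb R^n\sqcup\mathbb R^n$, using that $\rho$ restricts to $d_{\mathbb R^n}$ on each copy and that $\rho(x,h_i(x)')<C$. The path $h_1(r_1(R))'\to r_1(R)\to r_2(R)\to h_2(r_2(R))'$ (endpoints in the primed copy, the two inner vertices in the unprimed copy) gives
\[
d_{\mathbb R^n}\bigl(t_1(R+\delta_1(R)),t_2(R+\delta_2(R))\bigr)=\rho\bigl(h_1(r_1(R))',h_2(r_2(R))'\bigr)<2C+d_{\mathbb R^n}(r_1(R),r_2(R)),
\]
while the reversed path $r_1(R)\to h_1(r_1(R))'\to h_2(r_2(R))'\to r_2(R)$ gives
\[
d_{\mathbb R^n}(r_1(R),r_2(R))<2C+d_{\mathbb R^n}\bigl(t_1(R+\delta_1(R)),t_2(R+\delta_2(R))\bigr).
\]
Together with the trivial bound $\bigl|d_{\mathbb R^n}(t_1(R+\delta_1(R)),t_2(R+\delta_2(R)))-d_{\mathbb R^n}(t_1(R),t_2(R))\bigr|\leq|\delta_1(R)|+|\delta_2(R)|$ and with $|\delta_i(R)|<b_i+C$, these produce a constant $K=4C+b_1+b_2$, independent of $R$, such that $\bigl|d_{\mathbb R^n}(r_1(R),r_2(R))-d_{\mathbb R^n}(t_1(R),t_2(R))\bigr|\leq K$ for all sufficiently large $R$. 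Substituting $d_{\mathbb R^n}(q_1(R),q_2(R))=2R\sin(\tfrac12\theta(q_1,q_2))$, dividing by $2R$ and letting $R\to\infty$ yields $\sin(\tfrac12\theta(r_1,r_2))=\sin(\tfrac12\theta(t_1,t_2))$, hence $\theta(r_1,r_2)=\theta(t_1,t_2)$ since both half-angles lie in $[0,\pi/2]$, and therefore $d_{\mathbb S^{n-1}}(t_1,t_2)=d_{\mathbb S^{n-1}}(r_1,r_2)$.

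I do not expect a genuine obstacle: everything past Lemma~\ref{lem:existence} is the triangle inequality together with elementary Euclidean geometry. The one delicate point is the uniformity of all the constants in $R$ --- in particular the boundedness of the reparametrization shifts $\delta_i(R)$ --- since this is exactly what justifies dividing by $R$ and passing to the limit, and it is why the estimates above are phrased for large $R$.
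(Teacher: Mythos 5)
Your proposal is correct and follows essentially the same route as the paper: invoke Lemma~\ref{lem:existence} to get maps $r_i\to t_i$ moving points at bounded $\rho$-cost, compare chord lengths $2R\sin(\theta/2)$ via the triangle inequality in the double, and let $R\to\infty$. The only difference is that you track the reparametrization shifts $\delta_i(R)$ explicitly, whereas the paper disposes of them by noting that any almost isometry of $[0,\infty)$ is within bounded distance of the identity and replacing $\varphi_i$ by the exact isometry; your more careful bookkeeping is a welcome sharpening of that step.
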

\begin{proof}
By Lemma \ref{lem:existence}, there exists $C>0$ and an almost isometry $\varphi_i:r_i\to t_i$, $i=1,2$, such that $\rho(x,\varphi_i(x))<C$ for any $x\in r_i$. For $Y=[0,\infty)$, any almost isometry is close to the identity map, so without loss of generality we may assume that $\varphi_i$ is the bijective isometry between $r_i$ and $t_i$.

Let $x_i\in r_i$, $d_X(x_0,x_i)=R$, and let $y_i=\varphi_i(x_i)\in t_i$. Then $d_X(x_0,y_i)=R$. Let $\alpha$ (resp., $\beta$) be the angle between $r_1$ and $r_2$ (resp., between $t_1$ and $t_2$). Then 
$$
d_X(x_1,x_2)=2R\sin\frac{\alpha}{2}, \quad d_X(y_1,y_2)=2R\sin\frac{\beta}{2}. 
$$
As $\rho(x_i,y'_i)<C$, $|d_X(y_1,y_2)-d_X(x_1,x_2)|<2C$, hence $R|\sin\frac{\alpha}{2}-\sin\frac{\beta}{2}|<C$. As $R$ is arbitrary, $\alpha=\beta$.

\end{proof}

Let $PI(\mathbb S^{n-1})$ be the set of pairs $(A,u|_A)$, where $A\subset\mathbb S^{n-1}$ is an $F_\sigma$ set, and $u\in O(n)$ is an orthogonal operator on $\mathbb R^n$. This set has a natural injection to the inverse semigroup $PB(\mathbb S^{n-1})$ of all partial bijections of the sphere: assign to a pair $(A,u|_A)$ the partial bijection $u|_A:A\to B$, where $B=u|_A(A)$. Thus, $PI(\mathbb S^{n-1})$ is an inverse semigroup.

\begin{thm}
There exists a split semigroup surjection $M(\mathbb R^n)_{r}\to PI(\mathbb S^{n-1})$.

\end{thm}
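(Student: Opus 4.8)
The plan is to construct the surjection and its splitting explicitly, using the previously established machinery to identify each element of $M(\mathbb R^n)_r$ with a partial bijection of $\mathbb S^{n-1}$ that is locally (hence globally) an orthogonal transformation. First I would define the map $M(\mathbb R^n)_r\to PB(\mathbb S^{n-1})$ to be the homomorphism $s\mapsto\alpha_s$ from the Proposition in Section \ref{semigroup}, with $A(s)$ an $F_\sigma$ set by the last Lemma of Section 4. The real content is to show its image lands in $PI(\mathbb S^{n-1})$, i.e. that the bijection $\alpha(s):A(s)\to B(s)$ extends to an orthogonal operator $u\in O(n)$. For this I would invoke Lemma \ref{angle}: for any two rays $r_1,r_2\in A(s)$, writing $t_i=\alpha(s)(r_i)$, the spherical distances satisfy $d_{\mathbb S^{n-1}}(t_1,t_2)=d_{\mathbb S^{n-1}}(r_1,r_2)$. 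Thus $\alpha(s)$ is an isometry of the subset $A(s)\subset\mathbb S^{n-1}$ onto $B(s)$. Provided $A(s)$ is nonempty (the zero element maps to the empty partial bijection, which I treat separately), such an isometry between subsets of the round sphere extends uniquely to an isometry of the ambient $\mathbb R^n$ fixing the origin — because $A(s)$ spans a subspace $V$, the map extends to a linear isometry of $V$, and then arbitrarily (say by a reflection or the identity) to all of $\mathbb R^n$. Picking one such extension $u$ gives the pair $(A(s),u|_{A(s)})\in PI(\mathbb S^{n-1})$; different choices of $u$ give the same partial bijection, so the map is well-defined into $PI(\mathbb S^{n-1})$, and it is a homomorphism because $s\mapsto\alpha_s$ is.

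Next I would prove surjectivity. Given $(A,u|_A)\in PI(\mathbb S^{n-1})$ with $A$ an $F_\sigma$ set and $u\in O(n)$, set $B=u(A)$, and define a metric $\rho$ on the double of $\mathbb R^n$ compatible with the Euclidean metric by declaring, for $x$ in the cone $CA=\{tr:t\ge 0,\ r\in A\}$, that $\rho(x,y')=|x-y|+1$ if $ux=y$ in the obvious sense (more precisely, mimicking the construction in the third Lemma of Section 3 with the almost isometry $f=u|_{CA}$), and $\rho(x,y')=d_{\mathbb R^n}(x,CA)+1+\inf_{a\in CA}[|u a-y|]$ in general, taking the infimum version to guarantee the triangle inequality. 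One checks, exactly as in Section 3, that $\rho^*\rho$ is equivalent to $\rho_{CA}$ and $\rho\rho^*$ to $\rho_{CB}$, and since $CA\cap\mathbb S^{n-1}=A$, $CB\cap\mathbb S^{n-1}=B$, this places $s=[\rho]$ in $M(\mathbb R^n)_r$ with $A(s)=A$ and $\alpha(s)=u|_A$. Hence $s\mapsto(A,u|_A)$, proving surjectivity; moreover this same assignment $(A,u|_A)\mapsto[\rho]$ is the desired splitting, and one verifies it is a semigroup homomorphism by checking that the composition metric $\rho_1\circ\rho_2$ built from $(A_1,u_1)$ and $(A_2,u_2)$ is coarsely equivalent to the one built from their product in $PI(\mathbb S^{n-1})$ — the cone over $A_2\cap u_2^{-1}(A_1)$ with the operator $u_1u_2$ — which follows from the definition of $\circ$ and a routine estimate.

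The main obstacle I anticipate is the splitting's well-definedness and multiplicativity rather than surjectivity per se: one must make a consistent choice — the cone construction above does this by fixing the metric $\rho$ attached to $(A,u|_A)$ canonically in terms of the data, so that it does not depend on the extension of $u|_A$ off $A$ (it depends only on $u|_{CA}$, which is $u|_A$ scaled) — and then confirm $[\rho_1\circ\rho_2]$ agrees on the nose (up to coarse equivalence) with the metric assigned to the product partial bijection. A subtle point is the interplay with coarse equivalence classes: two different $F_\sigma$ cone-sets $CA$, $CA'$ give equivalent $\rho_{CA}$ iff the Hausdorff distance is finite, which on the sphere forces $\overline{A}=\overline{A'}$; but since $A(s)$ is genuinely an $F_\sigma$ subset of $\partial\mathbb R^n=\mathbb S^{n-1}$ and not merely its closure, I should check that the assignment respects the actual $F_\sigma$ sets, i.e. that distinct $F_\sigma$ sets with the same closure still yield distinct elements of $M(\mathbb R^n)_r$ — this is where the fine structure of $M(X)_r$ (as opposed to just $M(X)$) is used, and it is the step deserving the most care.
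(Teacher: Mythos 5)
Your forward map is the same as the paper's: use Lemma \ref{angle} to see that $\alpha(s)$ is a spherical isometry of $A(s)$ onto $B(s)$ and extend it to some $u\in O(n)$ whose restriction to $A(s)$ is uniquely determined. The genuine gap is in your splitting. The metric you attach to $(A,u|_A)$, namely $\rho(x,y')=\inf_{a\in CA}[d(x,a)+1+d(ua,y)]$ built from the single cone $CA$ with a uniform cost, cannot see the difference between $A$ and its closure: for any ray $r\in\overline{A}$ one has $d(tr,CA)=t\,\inf_{a\in A}|r-a|=0$ for every $t$, so $\rho(x,X')\equiv 1$ along $r$ and hence $f_\rho(r)<\infty$. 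Thus your $\chi$ sends $(A,u|_A)$ to an element whose domain is $\overline{A}$, and $\psi\circ\chi$ is \emph{not} the identity whenever $A$ is a non-closed $F_\sigma$ set. You flag exactly this issue in your last paragraph ("distinct $F_\sigma$ sets with the same closure") as the step deserving most care, but you do not resolve it, and with the unweighted cone it cannot be resolved, since $\rho_{CA}$ and $\rho_{C\overline{A}}$ are coarsely equivalent (the Hausdorff distance between $CA$ and $C\overline{A}$ is zero).

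The paper's construction avoids this by grading the cone: write $A=\cup_m A_m$ with $A_m$ closed and increasing, let $E_m$ be the cone over $A_m$, and set $\rho(x,y')=\inf_m\inf_{z\in E_m}[d(x,z)+m+1+d(u(z),y)]$. For a ray $r\notin A$ (even if $r\in\overline{A}$), each closed cone $E_m$ is eventually far from $r$ while the penalty $m$ grows, so $\rho(x,X')\to\infty$ along $r$ and $f_\rho(r)=\infty$; for $r\in A_k$ the cost stays bounded by roughly $k+1$. One must then also check (as the paper does) that the resulting class is independent of the chosen exhaustion $A=\cup_m A_m$. Without some such weighting your splitting only works for closed $A$, so the theorem as stated is not proved. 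Two minor further points: your displayed formula with separate infima over $CA$ should be a joint infimum to get the triangle inequality, and your remark that one should also verify multiplicativity of the splitting is a reasonable addition (the paper leaves this as a direct check).
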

\begin{proof}
Let $s\in M(\mathbb R^n)_{r}$, $[\rho]=s$. Set $A=\{r\in \mathbb S^{n-1}:f_\rho(r)<\infty\}$. For each $r\in A$ there exists $t\in\mathbb S^{n-1}$ such that $\rho(x,y')$ is uniformly bounded for any $x\in r$, $y\in t$ with $d_X(x_0,x)=d_X(x_0,y)$. By Lemma \ref{angle}, the map $r\mapsto t$ is an isometry of $\mathbb S^{n-1}$, hence is given by some orthogonal operator (may be not unique, but with unique restriction to $A$). This gives a map $\psi:M(\mathbb R^n)_{r}\to PI(\mathbb S^{n-1})$, which is well defined (i.e. does not depend on the representative $\rho$ in $s$). 

Let us construct a map in the opposite direction. Let $A=\cup_{m\in\mathbb N}A_m$, where $A_m$'s are closed subsets of $\mathbb S^{n-1}$ with $A_m\subset A_{m+1}$, $m\in\mathbb N$, let $u\in O(n)$, and let $B_m=u(A_m)$. Let $E_m=\{x\in\mathbb R^n:x\in r\mbox{\ for\ some\ }r\in A_m\}$. Set also $E_0=\{x_0\}$. For $x,y\in\mathbb R^n$, set
$$
\rho(x,y')=\inf_{m\in\mathbb N\cup\{0\}}\inf_{z\in E_m}[d_X(x,z)+m+1+d_X(u(z),y)].
$$
It is easy to check the triangle inequality, so $\rho$ determines a metric on the double of $\mathbb R^n$ compatible with the Euclidean metric. 

If $A=\cup_{m\in\mathbb N}A'_m$ with $A'_m$ closed and $A'_m\subset A'_{m+1}$, $m\in\mathbb N$, then, replacing $A'_m$ by $A_m\cap A'_m$, we may assume that $A'_m\subset A_m$, and as the sets $A_m$ are compact, there exists a monotone map $\kappa:\mathbb N\to\mathbb N$ such that $A_{\kappa(m)}\subset A'_m$ and $\lim_{m\to\infty}\kappa(m)=\infty$. Let $E'_m=\{x\in\mathbb R^n:x\in r\mbox{\ for\ some\ }r\in A'_m\}$, and let
$$
\rho'(x,y')=\inf_{m\in\mathbb N\cup\{0\}}\inf_{z\in E'_m}[d_X(x,z)+m+1+d_X(u(z),y)].
$$

Fix $x,y\in\mathbb R^n$. Take $\varepsilon>0$ and find $m\in\mathbb N$ and $z\in E_m$ such that 
$$
[d_X(x,z)+m+1+d_X(u(z),y)]-\rho(x,y')<\varepsilon.
$$
Then $z\in E'_{\tau(m)}$, where $\tau=\kappa^{-1}$, so there exists $k\in\mathbb N$ such that $m\leq k\leq \tau(m)$ such that $z\in E'_k\setminus E'_{k-1}$, and
$$
\rho'(x,y')\leq d_X(x,z)+k+1+d_X(z,y)<\rho(x,y')+(k-n)+1+\varepsilon,
$$
hence 
$$
\rho'(x,y')\leq \rho(x,y')+\tau(m)-m<\rho(x,y')+\tau(m).
$$
We also have $m\leq \rho(x,y')$, hence 
$$
\rho'(x,y')\leq \rho(x,y')+\tau(m)\leq \varphi(\rho(x,y')),
$$
where $\varphi(t)=t+\tau(t)$.

Similarly, we get $\rho(x,y')\leq \rho'(x,y')$ for any $x,y\in\mathbb R^n$, hence the metrics $\rho$ and $\rho'$ represent the same element in $M(\mathbb R^n)$. Let us check that this element lies in $M(\mathbb R^n)_{r}$. Let $r\notin A$. Set $E=\cup_{n\in\mathbb N}E_n$. As $A_n$ is closed for any $n\in\mathbb N$, $E_n$ is closed as well, hence for any $m\in\mathbb N$ there exists $R>0$ such that $d_X(x,E_m)>m$ for any $x\in r$ with $d_X(x_0,x)>R$. Then 
\begin{eqnarray*}
\rho(x,X')&=&\inf_{y\in X}\inf_{n\in\mathbb N}\inf_{z\in E_n\setminus E_{n-1}}[d_X(x,z)+n+d_X(u(z),y)]\\
&=&\inf_{n\in\mathbb N}\inf_{z\in E_n\setminus E_{n-1}}[d_X(x,z)+n].
\end{eqnarray*}
If $k\leq m$ and $z\in E_k$ then $d_X(x,E_k)\geq d_X(x,E_m)>m$, hence $d_X(x,z)+k>m$. If $k>m$ then $d_X(x,z)+k\geq n>m$. Thus, 
$$
\rho(x,X')=\inf_{k\in\mathbb N\cup\{0\}}\inf_{z\in E_k}[d_X(x,z)+k+1]>m\quad \mbox{when\ \ } d_X(x_0,x)>R,
$$ 
so in this case $[\rho_r\rho^*\rho]=0$. If $r\in A$ then $r\in A_k$ for some $k$. Set $t=u(r)$. If $x\in r$, $y\in t$ and $d_X(x_0,x)=d_X(x_0,y)$ then 
$$
\rho(x,y')=\inf_{l\in\mathbb N\cup\{0\}}\inf_{z\in E_l}[d_X(x,z)+l+1+d_X(u(z),u(x))]\leq l,
$$
hence $[\rho_r\rho]=[\rho\rho_t]$ and $[\rho_t\rho\rho^*]=[\rho_t]$, thus $[\rho]\in M(\mathbb R^n)_{r}$.

Assigning $[\rho]$ to $(A,u|_A)\in PI(\mathbb S^{n-1})$, we obtain a map $\chi:PI(\mathbb S^{n-1})\to M(\mathbb R^n)_{r}$. 
Direct check shows that $\psi\circ\chi$ is the identity map on $PI(\mathbb S^{n-1})$.

\end{proof}

\section{Evaluation of $M(X)_{r}$ for trees}

Let $X$ be a rooted tree with a basepoint $x_0$ (root), and let $d_X$ be the intrinsic metric on $X$ such that each edge has length one. A {\it ray} is a geodesic that starts at $x_0$ and has infinite length. We assume that for any point $x\in X$ there exists a ray that passes through $x$, i.e. $X$ has no dead ends. Rays can be described in terms of infinite sequences of subsequent vertices, $x_0,x_1,x_2,\ldots$ with $d_X(x_n,x_{n+1})=1$. Given a geodesic segment $\gamma$ connecting $x_0$ with a vertex $x$, denote by $U_\gamma$ the set of all rays that begin with $\gamma$.

Recall that the visual boundary $\partial X$ of $X$ is the set of all rays, endowed with the topology determined by the base consisting of all sets of the form $U_\gamma$, where $\gamma$ runs over the set of all geodesic segments starting at $x_0$.
The space $\partial X$ is known to be compact and Hausdorff. Moreover, this topology is metrizable: given two rays, $\lambda$ and $\mu$, with the common segment $\gamma$, set $d(\lambda,\mu)=e^{-|\gamma|}$, where $|\gamma|$ denotes the length of $\gamma$.    

Recall that a map $f:X\to Y$ between two metric spaces is {\it Lipschitz} if there exists $K>1$ such that $d_Y(f(x_1),f(x_2))\leq K d_X(x_1,x_2)$. If $X$ is locally compact, $X=\cup_{n\in\mathbb N} A_n$ with compact $A_n$'s then a map $f:X\to Y$ is {\it locally} Lipschitz if it is Lipschitz on each $A_n$ (with possibly increasing Lipschitz constants). Clearly, if $X=\cup_{n\in\mathbb N}B_n$ with compact $B_n$'s then a map is locally Lipschitz with respect to $X=\cup_{n\in\mathbb N}A_n$ iff it is locally Lipschitz with respect to $X=\cup_{n\in\mathbb N}B_n$. A bijective map $f:X\to Y$ between two locally compact metric spaces is locally {\it bi-}Lipschitz if both $f$ and $f^{-1}$ are locally Lipschitz.

For a metric space $Y$, a partial bijective locally bi-Lipschitz (PBLBL) map is a bijective locally bi-Lipschitz map $\varphi:A\to B$ between two $F_\sigma$ subspaces $A,B\subset Y$ (note that an $F_\sigma$ set is locally compact). It is easy to see that the set of all PBLBL maps has a natural structure of an inverse semigroup. Denote this inverse semigroup by $PBLBL(Y)$. 


\begin{lem}\label{rays}
Let $s\in M(X)_{r}$, $[\rho]=s$, and let $r_1, r_2$ be two rays in $X$. Let $e_i=[\rho_{r_i}]$, $i=1,2$. Suppose that $e_is^*s\neq 0$, $i=1,2$, and let $f_1,f_2\in E(M(X))$ satisfy $se_i=f_is$ and $f_iss^*=f_i$, $i=1,2$. Let $t_i$ be the ray such that $[\rho_{t_i}]=f_i$, which exists by Lemma \ref{lem:existence}. Let $d_{\partial X}$ be the metric on $\partial X$ described above. Then there exists $K>0$ such that $\frac{1}{K}<\frac{d_{\partial X}(t_1,t_2)}{d_{\partial X}(r_1,r_2)}<K$.

\end{lem}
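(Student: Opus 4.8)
The strategy is to mimic the Euclidean argument of Lemma \ref{angle}, replacing the exact-angle computation with a quantitative comparison of the lengths of common initial segments. By Lemma \ref{lem:existence} there is $C>0$ and almost isometries $\varphi_i\colon r_i\to t_i$ with $\rho(x,\varphi_i(x)')<C$ for all $x\in r_i$, $i=1,2$. As in Lemma \ref{angle}, an almost isometry of $[0,\infty)$ is uniformly close to the canonical arc-length parametrization, so after enlarging $C$ we may assume $\varphi_i$ sends the point of $r_i$ at distance $R$ from $x_0$ to the point of $t_i$ at distance $R$ from $x_0$. Write $r_1,r_2$ as vertex sequences; let $\gamma$ be their common initial segment, so $d_{\partial X}(r_1,r_2)=e^{-|\gamma|}$, and put $n=|\gamma|$. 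Likewise let $\gamma'$ be the common initial segment of $t_1,t_2$, of length $m$, so $d_{\partial X}(t_1,t_2)=e^{-m}$. I must bound $|n-m|$ by a constant depending only on $C$.

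The key estimate relates distances in $X$ to these segment lengths: if $x_i\in r_i$ and $x_0,x_1,\ldots$ branch after $\gamma$, then for $x_1,x_2$ both at distance $R\ge n$ from $x_0$ one has $d_X(x_1,x_2)=2(R-n)$ (go back to the branch point at distance $n$, then out again), and below the branch point the rays coincide. Symmetrically $d_X(y_1,y_2)=2(R-m)$ for $y_i=\varphi_i(x_i)\in t_i$ with $R\ge m$. From $\rho(x_i,y_i')<C$ and the triangle inequality for $\rho$ together with $\rho|_{X'}=d_X$, we get $|d_X(y_1,y_2)-d_X(x_1,x_2)|<2C$, i.e. $|2(R-m)-2(R-n)|<2C$, hence $|n-m|<C$ for all large $R$. (One must choose $R$ at least $\max(n,m)$; if either ray is $e_i s^* s\neq 0$ the segments are finite so this is fine, and if $n$ or $m$ is $0$ the inequality $d_X(x_1,x_2)=2R$ still holds.) Therefore
\[
\frac{d_{\partial X}(t_1,t_2)}{d_{\partial X}(r_1,r_2)}=e^{\,n-m}\in(e^{-C},e^{C}),
\]
so the claim holds with $K=e^{C}$.

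The main obstacle is the reduction to the arc-length parametrization: in the tree case one needs to know not merely that $\varphi_i$ moves points a bounded distance in $X$, but that it is compatible with radial distance from $x_0$ up to a bounded error, so that comparing $x_i$ and $y_i=\varphi_i(x_i)$ at a common radius $R$ is legitimate. This is exactly the remark already used in Lemma \ref{angle} that an almost isometry of $[0,\infty)$ is close to the identity, applied to $\varphi_i$ viewed as a map $[0,\infty)\to[0,\infty)$ after identifying $r_i$ and $t_i$ with $[0,\infty)$ via arc length; absorbing its defect into $C$ is routine. A minor point to check is the degenerate case where $r_1=r_2$ (then $n=\infty$ and $d_{\partial X}(r_1,r_2)=0$), which is excluded since the statement implicitly concerns distinct rays, and the case $t_1=t_2$, which cannot occur because $\alpha(s)$ is injective (Lemma preceding (\ref{As})). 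With these in place the estimate above gives the result.
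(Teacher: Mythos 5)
Your proposal is correct and follows essentially the same route as the paper's own proof: reduce the almost isometries $\varphi_i$ to arc-length isometries, compare $d_X(x_1,x_2)=2(R-L)$ with $d_X(y_1,y_2)=2(R-M)$ for points at common radius $R$, deduce $|L-M|<C$ from $\rho(x_i,y_i')<C$, and conclude $K=e^C$. The extra attention to degenerate cases is a harmless refinement of the same argument.
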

\begin{proof}
By Lemma \ref{lem:existence}, there exists $C>0$ and an almost isometry $\varphi_i:r_i\to t_i$, $i=1,2$, such that $\rho(x,\varphi_i(x))<C$ for any $x\in r_i$. Any almost isometry of $Y=[0,\infty)$ is close to the identity map, so without loss of generality we may assume that $\varphi_i$ is the bijective isometry between $r_i$ and $t_i$.

Let $x_i\in r_i$, $d_X(x_0,x_i)=R$, and let $y_i=\varphi_i(x_i)\in t_i$, $i=1,2$. Then $d_X(x_0,y_i)=R$. Let $L$ (rep., $M$) be the length of the common part of the geodesic rays $r_1$ and $r_2$ (resp., $t_1$ and $t_2$). We assume that $R>L,M$. Then $d_X(x_1,x_2)=2(R-L)$, $d_X(y_1,y_2)=2(R-M)$. As $|d_X(y_1,y_2)-d_X(x_1,x_2)|<2C$, we have $|R-L-(R-M)|=|M-L|<C$. This means that $\frac{d_{\partial X}(t_1,t_2)}{d_{\partial X}(r_1,r_2)}=\frac{e^{-M}}{e^{-L}}<e^C$. Similarly, $\frac{d_{\partial X}(t_1,t_2)}{d_{\partial X}(r_1,r_2)}>e^{-C}$.

\end{proof}

\begin{thm}
Let $X$ be a rooted tree without dead ends, with the intrinsic metric. Then there exists a split semigroup surjection $M(X)_{r}\to PBLBL(\partial X)$.

\end{thm}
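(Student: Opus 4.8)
The plan is to mimic the structure of the proof of the Euclidean case (Theorem~\ref{Rn}) as closely as possible, building a map $\psi\colon M(X)_r\to PBLBL(\partial X)$ in one direction and a splitting $\chi\colon PBLBL(\partial X)\to M(X)_r$ in the other, and then checking $\psi\circ\chi=\id$. For the forward map, given $s=[\rho]\in M(X)_r$, set $A=\{r\in\partial X:f_\rho(r)<\infty\}$, which is an $F_\sigma$ set by Lemma~\ref{lem:existence} and the preceding $F_\sigma$ lemma. For each $r\in A$ there is, by the definition of $M(X)_\Phi$, a unique ray $t\in\partial X$ with $[\rho_r]s=s[\rho_t]$ and $[\rho_t]ss^*=[\rho_t]$; define $\varphi(r)=t$. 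Lemma~\ref{rays} shows precisely that $r\mapsto\varphi(r)$ distorts $d_{\partial X}$ by at most a multiplicative constant, but a priori the constant $K$ depends on the pair $r_1,r_2$ through the bound $C$ coming from $\rho\rho_A\rho^*(x,x')<C$, and $C$ grows as $f_\rho$ grows. So on each closed set $A_n=f_\rho^{-1}([1,n])$ the constant is uniform, giving that $\varphi|_{A_n}$ is bi-Lipschitz; hence $\varphi\colon A\to B:=\varphi(A)$ is bijective locally bi-Lipschitz, i.e.\ an element of $PBLBL(\partial X)$. One must also check $\varphi$ is a bijection onto an $F_\sigma$ set: injectivity and surjectivity onto $B$ follow from applying the same construction to $s^*$ (the uniqueness argument in Section~\ref{semigroup} runs in both directions), and $B=A(s^*)$ is $F_\sigma$ by the same lemma applied to $\rho^*$. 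Independence of the representative $\rho$ in the class $s$ is immediate from coarse invariance of composition. This defines $\psi(s)=(A,\varphi)$.

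For the splitting, I would copy the Euclidean recipe. Given $(A,\varphi)\in PBLBL(\partial X)$ with $A=\cup_m A_m$, $A_m$ closed, $A_m\subset A_{m+1}$, and $\varphi$ bi-Lipschitz with constant $K_m$ on $A_m$, let $E_m=\{x\in X:x\in r\text{ for some }r\in A_m\}$, $E_0=\{x_0\}$. The one real change from the Euclidean case is that $\varphi$ no longer comes from a global isometry of the boundary; $\varphi$ does \emph{not} extend to an isometry of $X$, so we cannot write $d_X(u(z),y)$. Instead, $\varphi$ induces a canonical partial map $\Phi_m\colon E_m\to X$ on the tree itself: a point $z\in E_m$ at distance $R$ from $x_0$ lies on some ray $r\in A_m$, and $\Phi_m(z)$ should be the point at distance $R$ from $x_0$ on the ray $\varphi(r)$. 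For this to be well defined one needs that if $z$ lies on two rays $r,r'\in A_m$ then $\varphi(r)$ and $\varphi(r')$ share an initial segment of length $R$; this is exactly the content of the Lipschitz bound on $\varphi^{-1}$ together with tree geometry — if $d_{\partial X}(r,r')\le e^{-R}$ then $d_{\partial X}(\varphi(r),\varphi(r'))\le K_m e^{-R}$, so one pays a bounded loss $\log K_m$ in how far down the rays agree, which will be absorbed into the ``$+m$'' penalty. Then set
\begin{equation*}
\rho(x,y')=\inf_{m\in\mathbb N\cup\{0\}}\inf_{z\in E_m}\bigl[d_X(x,z)+m+1+d_X(\Phi_m(z),y)\bigr],
\end{equation*}
with $\Phi_0(x_0)=x_0$, check the triangle inequality, and call the class $\chi(A,\varphi)=[\rho]$.

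The remaining verifications parallel Section~\ref{Rn} almost verbatim. First, $[\rho]$ is independent of the chosen exhaustion $\{A_m\}$ and of the Lipschitz data: given another exhaustion $\{A_m'\}$, intersect to assume $A_m'\subset A_m$, use compactness of the $A_m$ to get a monotone cofinal reindexing $\kappa$, and run the same ``$\rho'\le\varphi(\rho)$ and $\rho\le\rho'$'' estimate with $\varphi(t)=t+\tau(t)$, $\tau=\kappa^{-1}$. Second, $[\rho]\in M(X)_r$: if $r\notin A$ then, because each $E_m$ is closed in $X$, for every $m$ there is $R$ with $d_X(x,E_m)>m$ for $x\in r$, $d_X(x_0,x)>R$, so $\rho(x,X')>m$ eventually and $[\rho_r\rho^*\rho]=0$; if $r\in A$, say $r\in A_k$, then with $t=\varphi(r)$ one checks $\rho(x,y')\le k+1$ for $x\in r$, $y\in t$ at equal distance from $x_0$, giving $[\rho_r\rho]=[\rho\rho_t]$ and $[\rho_t\rho\rho^*]=[\rho_t]$. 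Third, $\psi\circ\chi=\id$: the computation above shows $f_\rho(r)<\infty$ exactly for $r\in A$, so $A(\chi(A,\varphi))=A$, and the matched ray for $r$ is $t=\varphi(r)$, so the induced partial bijection is $\varphi$ itself. The main obstacle I anticipate is the well-definedness of $\Phi_m$ and the bookkeeping of how the local Lipschitz constants $K_m$ interact with the index $m$ in the definition of $\rho$ — one must choose the exhaustion and the penalty so that the loss $\log K_m$ in boundary-agreement-depth is dominated by the gain in the ``$+m$'' term, which is where the \emph{locally} bi-Lipschitz (rather than globally bi-Lipschitz) hypothesis is genuinely used; surjectivity of $\psi$ onto all of $PBLBL(\partial X)$ hinges on getting this right.
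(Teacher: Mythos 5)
Your proposal takes essentially the same route as the paper: the same forward map via $f_\rho$ and Lemma~\ref{rays}, and the same splitting obtained by lifting the boundary map to the tree along a choice of ray through each point, with the resulting ambiguity controlled by the local bi-Lipschitz constants (your $\Phi_m$ is exactly the paper's $f$ on $E_m$). The only cosmetic difference is the penalty term: the paper uses $2C_m=2\log K_m$ in place of your $m+1$, which makes compatibility of the metric follow directly from the distortion estimate and sidesteps the bookkeeping issue you flag at the end.
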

\begin{proof}

Let $s\in M(X)_{r}$, $[\rho]=s$. Set $A_m=\{r\in \partial X:f_\rho(r)<m\}$, $A=\cup_{m\in\mathbb N}A_m$. For each $r\in A$ there exists $t\in\partial X$ such that $\rho(x,y')$ is uniformly bounded for any $x\in r$, $y\in t$ with $d_X(x_0,x)=d_X(x_0,y)$. By Lemma \ref{rays}, the map $r\mapsto t$ is a Lipschitz map on each $A_m$, hence a locally bi-Lipschitz map on $A\subset\partial X$. This gives a map $\psi:M(\mathbb R^n)_{r}\to PBLBL(\partial X)$, which is well defined (i.e. does not depend on the representative $\rho$ in $s$).

Let us construct a map in the opposite direction. Let $A=\cup_{m\in\mathbb N}A_m$, where $A_m$'s are closed subsets of $\partial X$ with $A_m\subset A_{m+1}$, $m\in\mathbb N$, let $F:A\to\partial X$ be a locally bi-Lipschitz map, and let $B_m=F(A_m)$. Let $F|_{A_m}$ is a bi-Lipschitz map with a bi-Lipschitz contant $K_m=e^{C_m}$. Let $E_m=\{x\in X:x\in r\mbox{\ for\ some\ }r\in A_m\}$, $E=\cup_{m\in\mathbb N}E_m$. Set also $E_0=\{x_0\}$. 

For each $x\in X$ choose a geodesic ray $r_x\in\partial X$ such that $x\in r_x$. Let $x\in E_m$, and let $y\in F(r_x)$ satisfy $d_X(x_0,x)=d_X(x_0,y)$. Set $f(x)=y$. 

Another choice of geodesic rays, $x\mapsto r'_x$, gives rise to another map $f':E\to X$.

\begin{lem} One has
$|d_X(f(x),f(y))-d_X(x,y)|<2C_m$ for any $x,y\in E_m$, and $d_X(f(x),f'(x))<2C_m$ for any $x\in E_m$.

\end{lem}
\begin{proof}
Let $x,y\in E_m$.
Let $c$ be the length of the common part of $r_x$ and $r_y$, and let $a=d_X(x_0,x)$, $b=d_X(x_0,y)$. Then $d(r_x,r_y)=e^{-c}$, and $d_X(x,y)=a+b-2c$. 
Let $K_m$ be the Lipschitz constant on $A_m$, $C_m=\log K_m$, and let $t_x=F(r_x)$, $t_y=F(r_y)$. Then $\frac{e^{-c}}{K_m}<d(t_x,t_y)<e^{-c}K_m$, i.e. the length $d$ of the common part of $t_x$ and $t_y$ lies in the interval $(c-K_m,c+K_m)$. Then $d_X(f(x),f(y))=a+b-2d$, hence 
$$
|d_X(f(x),f(y))-d_X(x,y)|=2|c-d|=2C_m.
$$

If $x\in r_x,r'_x$ then the length of the common part of $r_x$ and $r'_x$ is not less than $c=d_X(x_0,x)$, hence $d(r_x,r'_x)\leq e^{-c}$. Let $t=F(r_x)$, $t'=F(r'_x)$. Then $d(t,t')<K_m e^{-c}=e^{-(c-C_m)}$, hence the length of the common part of $t$ and $t'$ is greater than $c-C_m$. As $d_X(x_0,f(x))=d_X(x_0,f'(x))=c$, $f(x)\in t$, $f'(x)\in t'$, we have $d_X(f(x),f'(x))<2C_m$.

\end{proof}

For $x,y\in X$, set
$$
\sigma(x,y')=\inf_{m\in\mathbb N\cup\{0\}}\inf_{z\in E_m}[d_X(x,z)+2C_m+d_X(u(z),y)].
$$

This formula determines a metric on $X\sqcup X$ (in fact, this is the intrinsic metric of the graph that is obtained from the two copies of the tree with added edges that connect any $x\in E_m$ with $f(x)'\in X'$ of length $2C_m$). Let us check that this metric is compatible with $d_X$. Let $x_1,x_2,y\in X$; $i=1,2$. Then
\begin{eqnarray*}
\sigma(x_1,y')+\sigma(x_2,y')&=&\inf_{m_i\in\mathbb N\cup\{0\}}\inf_{z_i\in E_{m_i}}d_X(x_1,z_1)+2C_{m_1}\\
&+&d_X(f(z_1),y)+d_X(x_2,z_2)+2C_{m_2}+d_X(f(z_2),y)\\
&\geq&\inf_{m_i\in\mathbb N\cup\{0\}}\inf_{z_i\in E_{m_i}}d_X(x_1,z_1)+2(C_{m_1}+C_{m_2})+d_X(f(z_1),f(z_2))\\
&\geq&\inf_{z_i\in E_{m_i}}d_X(x_1,z_1)+d_X(z_1,z_2)\\
&\geq&d_X(x_1,x_2),
\end{eqnarray*}
and, similarly, $\sigma(x,y'_1)+\sigma(x,y'_2)\geq d_X(y_1,y_2)$ for any $x,y_1,y_2\in X$.

As for Euclidean spaces (Section \ref{Rn}), it is easy to check that $[\sigma]\in M(X)_{r}$, that the map $\chi:PBLBL(\partial X)\to M(X)_{r}$, $\chi(A,F,B)=[\sigma]$, is well defined and that $\psi\circ\chi$ is the identity map on $PBLBL(\partial X)$.

\end{proof}


\begin{cor}
For $X=[0,\infty)$, $M(X)_0$ consists of zero and unit. For $Y=\mathbb R$, $M(Y)_0$ is the semigroup of partial bijections of the set $\{0,1\}$. 
\end{cor}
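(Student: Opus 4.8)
The plan is to specialise the theorem for rooted trees to our two spaces, so throughout I write $M(X)_{r}$ for the semigroup denoted $M(X)_0$ in the statement. With vertices placed at the integer points, both $[0,\infty)$ and $\mathbb R$ (rooted at $0$) are rooted trees without dead ends, so the theorem furnishes a split semigroup surjection $M(X)_{r}\to PBLBL(\partial X)$ in each case; it then remains to compute $PBLBL(\partial X)$ and to verify that this surjection is injective. For $X=[0,\infty)$ the only geodesic ray from $x_0$ is $X$ itself, so $\partial X$ is a point and $PBLBL(\partial X)=\{\emptyset,\mathrm{id}\}$, i.e.\ the two-element semigroup consisting of zero and unit. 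For $X=\mathbb R$ there are exactly two rays, $r_+=[0,\infty)$ and $r_-=(-\infty,0]$; the visual metric makes $\partial\mathbb R=\{r_+,r_-\}$ a two-point discrete space, and on a finite discrete space every partial bijection is trivially bi-Lipschitz, so $PBLBL(\partial\mathbb R)=PB(\{0,1\})$.

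For injectivity I would use the rigidity of $[0,\infty)$ already exploited in Lemmas \ref{angle} and \ref{rays}: every almost isometry of $[0,\infty)$ lies at bounded distance from the identity. Let $s=[\rho]\in M(X)_{r}$, let $\alpha_s$ be its induced partial bijection, and (for $\mathbb R$) put $e_\pm=[\rho_{r_\pm}]$. If $\alpha_s=\emptyset$, then $[\rho_r]\,s^*s=0$ for every ray $r$; a short computation with the diagonal functions $x\mapsto\sigma(x,x')$ — using that the zero of $M(X)$ is $[\rho_{\{x_0\}}]$, that $[\rho_X]=1$ for $X=[0,\infty)$, and that $e_+e_-=0$ while $e_+\vee e_-=1$ for $X=\mathbb R$ — forces $s^*s=0$, hence $s=0$. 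If $\alpha_s\neq\emptyset$, then by Lemma \ref{lem:existence} each active ray $r$ carries an almost isometry $\varphi_r\colon r\to\alpha_s(r)$ with $\rho(x,\varphi_r(x)')<C$, which by the rigidity differs from the canonical isometry between the two rays by a bounded amount. For $[0,\infty)$ this already gives $\rho(x,x')<C'$ for all $x$, whence $[\rho]=[\rho_X]=1$. For $\mathbb R$, since $r_+\cup r_-=\mathbb R$, the triangle inequality determines $\rho(x,y')$ up to an additive constant from the maps $\varphi_r$ (for $x$ in an active ray $r$ one gets $\rho(x,y')=d_X(\varphi_r(x),y)+O(1)$) together with $\rho(x,X')\asymp d_X(x,\bigcup_{\text{active }r}r)$ on the remaining direction; thus the coarse class of $\rho$ depends only on $\alpha_s$, and comparison with the explicit section $\chi$ of the theorem yields $s=\chi(\alpha_s)$. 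Hence $s\mapsto\alpha_s$ is injective, and therefore $M([0,\infty))_{r}\cong\{0,1\}$ and $M(\mathbb R)_{r}\cong PB(\{0,1\})$, as claimed.

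The step I expect to be the main obstacle is this last point in the injectivity argument: showing that the coarse class of $\rho$ on the whole double is pinned down by the gluings of the finitely many rays. This is exactly where one-dimensionality is essential — for $\mathbb R^n$ with $n\ge2$ the identification can be twisted by an element of $O(n)$, which is why there one obtains the much larger $PI(\mathbb S^{n-1})$ — and it must be combined carefully with the control on $\rho$ in the directions outside $A(s)$, which is available only through the relations $[\rho_r]\,s^*s=0$ and the identification of the zero of $M(X)$ with $[\rho_{\{x_0\}}]$.
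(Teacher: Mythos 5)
Your computation of the target semigroup is exactly the paper's proof: the printed argument consists entirely of the observation that $[0,\infty)$ has one end and $\mathbb R$ has two, so $PBLBL(\partial X)$ is $\{0,1\}$ in the first case and $PB(\{0,1\})$ in the second. Where you genuinely diverge is that you go on to prove injectivity of the map $M(X)_{r}\to PBLBL(\partial X)$, which the paper's proof does not address at all --- and which is needed if the corollary is read literally, since the theorem only supplies a \emph{split surjection}, so a priori $M(X)_{r}$ could be strictly larger than the semigroup it surjects onto. Your injectivity sketch is sound and all its ingredients are available in the paper: rigidity of almost isometries of $[0,\infty)$ handles the active rays (giving $\rho(x,\varphi_r(x)')$ bounded with $\varphi_r$ the canonical isometry, hence the upper bound $\rho(x,y')\leq\rho_{\chi(\alpha_s)}(x,y')+O(1)$); for the inactive directions, $e_{\pm}s^*s=0$ together with Proposition 3.2 of \cite{M1} (idempotents are determined by the coarse class of the diagonal function) and properness of $\mathbb R$ forces $x\mapsto\rho(x,X')$ to be coarsely equivalent to $x\mapsto d_X(x,\bigcup_{\text{active}}r)+1$. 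Two small points deserve care: ``$e_+\vee e_-=1$'' is not literally meaningful in $M(\mathbb R)$ --- what you actually use, and what is true, is that an idempotent annihilated by both $e_+$ and $e_-$ has a proper diagonal function and is therefore the zero $[\rho_{\{x_0\}}]$; and the lower bound $\rho(x,y')\geq\rho_{\chi(\alpha_s)}(x,y')-O(1)$ in the coarse sense, when both $x$ and $y$ sit far from the active rays, should be extracted from $\rho(x,y')\geq\max\bigl(\rho(x,X'),\rho(X,y')\bigr)$ together with $\max(\psi(a),\psi(b))\geq\psi\bigl(\tfrac{a+b}{2}\bigr)$ for an increasing proper $\psi$. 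With those details written out, your argument is complete and strictly stronger than the one printed in the paper.
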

\begin{proof}
The space $X$ (resp., $Y$) has one end (resp., two ends). Thus, the semigroup of partial bijective locally bi-Lipschitz maps of the set of ends is the semigroup of partial bijections of the set consisting of one element (resp., of two elements).

\end{proof}

\end{document}